\documentclass{article}
\usepackage{amsthm}
\usepackage{amsmath}
\usepackage{amsfonts}
\usepackage[letterpaper,body={13.5cm,22cm}, mag=1000]{geometry}
\usepackage{amssymb}
\usepackage{secdot}
\usepackage{setspace}
\usepackage{titlesec}
\titleformat{\section}[runin]{\bfseries\filcenter}{\thesection}{1em}{}

\renewcommand{\thesection}{\arabic{section}}
\title{\large \bf Finite $p$-groups with minimum number of central automorphisms fixing the center element-wise}
\author{\small \bf Deepak Gumber\footnote{Research supported by NBHM, Department of Atomic Energy.} \\
\small \em School of Mathematics and Computer Applications \\
\small \em Thapar University, Patiala, India\\
\\
\small \bf Mahak Sharma\\
\small \em Department of Applied Sciences and Humanities\\
\small \em Baddi University of Emerging Sciences and Technology, Baddi, India \\
} 
\date{}
\DeclareMathOperator{\inn}{Inn}
\DeclareMathOperator{\Hom}{Hom}

\DeclareMathOperator{\aut}{Aut}
\DeclareMathOperator{\cent}{Aut_z}

\DeclareMathOperator{\centz}{Aut_z^{z}}

\newtheorem{thm}{Theorem}[section]
\newtheorem{lm}[thm]{Lemma}

\begin{document}
\maketitle
\begin{abstract}
\noindent {\bf Abstract.}
We characterize finite $p$-groups $G$ of order upto $p^7$ for which the group of central automorphisms fixing the center element-wise is of minimum possibe order.
\end{abstract}

\vspace{2ex}

\noindent {\bf 2010 Mathematics Subject Classification:}
20D45, 20D15.

\vspace{2ex}

\noindent {\bf Keywords:} Central automorphism, Finite $p$-group.

\section{Introduction} 
Let $G$ be a finite group and let $G'$ and $Z(G)$ respectively denote the commutator subgroup and the center of $G$. An automorphism $\alpha$ of $G$ is called a central automorphism if it commutes with all inner automorphisms; or equivalently $x^{-1}\alpha(x)\in Z(G)$ for all $x\in G$. A central automorphism, by definition, fixes $G'$ element-wise. By $\cent(G)$ we denote the group of all central automorphisms of $G$, and by $\centz(G)$ we denote the group of those central automorphisms of $G$ which fix $Z(G)$ element-wise. The central automorphism group can be as large as possible when all automorphisms are central, that is, when $\cent(G)=\aut(G)$; and can be as small as possible when $\cent(G)=Z(\inn(G))$. If $G$ is abelian, then $\inn(G)$ is trivial and hence $\cent(G)=\aut(G)$. If $G$ is non-abelian and if $\cent(G)=\aut(G)$, then $\inn(G)$ is abelian and hence $G$ is a nilpotent group of class 2. So one can restrict attention to $p$-groups. Non-abelian $p$-groups for which all automorphisms are central have been well studied. If $\aut(G)$ is abelian, then necessarily $\cent(G)=\aut(G)$, and various authors have considered this situation; and if $\aut(G)$ is non-abelian, even then all automorphisms may be central and this case has also been well explored. Curran \cite{cur} was the first to consider the opposite extreme$-$the case when $\cent(G)$ is minimum possible. He proved that for $\cent(G)$ to be equal to $Z(\inn(G))$, it is necessary that $Z(G)$ must be contained in $G'$ and $Z(\inn(G))$ must not be cyclic \cite[Corollaries 3.7, 3.8]{cur}. Sharma and Gumber \cite{shagum} and recently Kalra and Gumber \cite{kalgum} have characterized finite $p$-groups $G$ of order upto $p^7$ for which $\cent(G)=Z(\inn(G))$. The purpose of this note is to find finite $p$-groups $G$ for which 

\begin{equation}
Z(\inn(G))=\centz(G)<\cent(G).
\end{equation}
In Theorem 2.3, we prove that there is no finite $p$-group $G$ of order upto $p^6$ satisfying (1); and in Theorem 2.4, we prove that a group $G$ of order $p^7$ satisfies (1) if and only if $Z(G)\simeq C_{p^2}$, $|G'|=p^4$ and nilpotence class of $G$ is 4.

It follows from the results of Curran \cite[Theorem 2.3]{cur} and Adney and Yen \cite[Theorem 1]{adnyen} that for any non-abelian group $G$,
$$\centz(G)\simeq \Hom(G/G'Z(G),Z(G)),$$
and if $G$ is purely non-abelian and finite, then 
$$|\cent(G)|=|\Hom(G/G',Z(G))|.$$
We shall need these results quite frequently in our proofs.

\section{Proofs of Theorems}
Let $G$ be a finite $p$-group. Observe that if $Z(G)\le G'$, then $\centz(G)=\cent(G)$. Therefore, hereafter, we shall assume that $Z(G)$ is not contained in $G'$. Attar \cite{att} proved that $\centz(G)=\inn(G)$ if and only if $G'=Z(G)$ and $Z(G)$ is cyclic. If $G$ is nilpotent of class 2, then $\inn(G)=Z(\inn(G))$; it follows that if $\centz(G)=Z(\inn(G))$, then $G'=Z(G)$, which is a contradiction to our first assumption. It is, therefore, also necessary to assume that $G$ is of nilpotence  class at least 3. With these two assumptions, one can observe that $G$ is not of maximal class, $|Z(G)|\ge p^2$ and $|G|$ is at least $p^5$. We start with the following lemma.

\begin{lm}
If $G'$ is abelian, then $G/G'Z(G)$ is not cyclic.
\end{lm}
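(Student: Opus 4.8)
The plan is to argue by contradiction, so suppose $G/G'Z(G)$ is cyclic. The key observation I would make is that this quotient is intrinsically the abelianization of the central quotient $\bar{G} := G/Z(G)$. Indeed, the commutator subgroup of $\bar{G}$ is $(G/Z(G))' = G'Z(G)/Z(G)$, and therefore
$$\bar{G}/\bar{G}' = \big(G/Z(G)\big)\big/\big(G'Z(G)/Z(G)\big) \cong G/G'Z(G).$$
So the assumption is precisely that the abelianization of the finite $p$-group $\bar{G}$ is cyclic.

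Next I would invoke the Burnside basis theorem. For the $p$-group $\bar{G}$ one has $\Phi(\bar{G}) = \bar{G}'\bar{G}^{p} \supseteq \bar{G}'$, so $\bar{G}/\Phi(\bar{G})$ is a quotient of the cyclic group $\bar{G}/\bar{G}'$ and is hence itself cyclic; being also elementary abelian, it has order at most $p$. Thus $\bar{G}$ is generated by a single element, i.e. $G/Z(G)$ is cyclic. But a group whose quotient by its centre is cyclic must be abelian, which forces $G' = 1$ and contradicts the standing assumption that $G$ has nilpotence class at least $3$. This contradiction shows that $G/G'Z(G)$ is not cyclic.

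The one point worth flagging is that this argument never uses the hypothesis that $G'$ is abelian; the conclusion in fact holds for every non-abelian $p$-group, so here the difficulty is essentially nil and the lemma is a purely structural observation. If one instead wants a proof that genuinely exploits $G'$ being abelian and stays in the central-automorphism language of the paper, the route would be to combine $\centz(G) \cong \Hom(G/G'Z(G), Z(G))$ with the standing equality $\centz(G) = Z(\inn(G)) \cong Z_2(G)/Z(G)$: if $G/G'Z(G) \cong C_{p^{s}}$ were cyclic, then $\Hom(G/G'Z(G), Z(G)) \cong Z(G)[p^{s}]$, and one would have to play the order and rank of this torsion subgroup against the structure that $G'$ abelian imposes on $Z_2(G)/Z(G)$. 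I expect that comparison to be the only delicate step in such an approach; the direct abelianization argument above sidesteps it completely, and is the one I would use.
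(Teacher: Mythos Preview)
Your argument is correct and takes a different route from the paper's. The paper writes $G/G'Z(G)=\langle aG'Z(G)\rangle$, so $G=\langle a,G'Z(G)\rangle$, and then uses the abelianness of $G'$ (so that $G'Z(G)$ centralises $G'$) to check that $g\mapsto[g,a]$ is a homomorphism of $G'Z(G)$ onto $G'$ with kernel exactly $Z(G)$; this forces $|G'Z(G)|=|G'|\,|Z(G)|$, i.e.\ $G'\cap Z(G)=1$, which is impossible for a nontrivial normal subgroup of a finite $p$-group. Your approach instead identifies $G/G'Z(G)$ with the abelianisation of $\bar G=G/Z(G)$ and invokes the Burnside basis theorem to conclude that $\bar G$ is cyclic. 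Your proof is shorter and, as you correctly observe, never uses that $G'$ is abelian, so it establishes a stronger statement; the paper's argument, by contrast, genuinely consumes that hypothesis and extracts the explicit identity $|G'Z(G):Z(G)|=|G'|$ along the way.

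One small correction to your closing remarks: the equality $\centz(G)=Z(\inn(G))$ is not a standing hypothesis here --- it is precisely condition~(1), which the paper is trying to characterise, and the lemma is applied to rule~(1) out in various cases. So the alternative route you sketch would be circular as a proof of the lemma itself; stick with your direct abelianisation argument.
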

\begin{proof}
If $G/G'Z(G)=\langle aG'Z(G)\rangle$ is cyclic, then $G=\langle a,G'Z(G)\rangle$. It is easy to see that $G'=\{[g,a]|g\in G'Z(G)\}$ and the map $g\mapsto [g,a]$ is a homomorphism of $G'Z(G)$ onto $G'$ with kernel $Z(G)$. It follows that $|G'Z(G)|=|G'||Z(G)|$, which is not possible in a finite $p$-group.
\end{proof}

\begin{lm}
If $G$ is of coclass $2$, then $G$ does not satisfy $(1)$.
\end{lm}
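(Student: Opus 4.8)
The plan is to assume that a coclass $2$ group $G$ satisfying the standing assumptions does satisfy $(1)$ and to derive a contradiction by squeezing $|\centz(G)|$ between two bounds. First I recall that $Z(\inn(G))\simeq Z_2(G)/Z(G)$, where $Z_2(G)$ is the second centre, and that $Z(\inn(G))\le\centz(G)$ always holds: the inner automorphisms lying in $\cent(G)$ are precisely those in $Z(\inn(G))$, and every inner automorphism fixes $Z(G)$ element-wise. Thus $(1)$ forces the equality $|\Hom(G/G'Z(G),Z(G))|=|Z_2(G)/Z(G)|$ together with the strict inequality $\centz(G)<\cent(G)$.

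The first main step is a coclass bound on the upper central series $1=Z_0<Z_1<\cdots<Z_c=G$, where $c=n-2$ is the class and $|G|=p^n$. There are exactly $c$ strict factors whose orders multiply to $p^n$, so the total excess $\sum(\log_p|Z_i/Z_{i-1}|-1)$ equals $n-c=2$. Since $|Z(G)|=|Z_1|\ge p^2$ already contributes excess at least $1$, I get $|Z(G)|\le p^3$ and $|Z_2(G)/Z(G)|\le p^2$, with $|Z_2(G)/Z(G)|=p^2$ only when $|Z(G)|=p^2$; hence $|\centz(G)|\le p^2$. The second step is a matching lower bound. For any non-abelian $G$ the abelianization of $G/Z(G)$ is $G/G'Z(G)$, so if the latter were cyclic then $G/Z(G)$ would be cyclic and $G$ abelian, a contradiction; thus $d(G/G'Z(G))\ge 2$ (this strengthens Lemma 2.1 by removing the hypothesis that $G'$ be abelian). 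Writing $Z(G)$ and $G/G'Z(G)$ as products of cyclic groups gives $|\centz(G)|=|\Hom(G/G'Z(G),Z(G))|\ge p^{2d(Z(G))}$. Comparing with $|\centz(G)|\le p^2$ forces $d(Z(G))=1$, so $Z(G)$ is cyclic, $G/G'Z(G)\simeq C_p\times C_p$, and $|\centz(G)|=p^2=|Z_2(G)/Z(G)|$, whence $|Z(G)|=p^2$; that is, $Z(G)\simeq C_{p^2}$.

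The crux is to rule out this single residual configuration. Since $Z(G)\simeq C_{p^2}$ is cyclic and not contained in $G'$, the subgroup $Z(G)\cap G'$ is proper, hence equals the unique subgroup of order $p$, which must be $\gamma_c$ (the last nontrivial lower central term is central and lies in $G'$). Then $G'Z(G)/G'\simeq Z(G)/\gamma_c\simeq C_p$, and from $G/G'Z(G)\simeq C_p\times C_p$ I conclude $|G/G'|=p^3$; by the lower central coclass count this forces every factor $\gamma_i/\gamma_{i+1}$ with $i\ge 2$ to have order $p$, so $|\gamma_{c-1}|=p^2$. Because $\gamma_{c-1}\le G'$ while $Z(G)\cap G'=\gamma_c$ has order $p$, we have $\gamma_{c-1}\not\le Z(G)$, so the class of $G/Z(G)$ is exactly $c-1=n-3$. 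Thus $G/Z(G)$ has order $p^{n-2}$ and class $n-3$, i.e.\ it is a group of maximal class (note $n-2\ge 3$), and its centre $Z_2(G)/Z(G)$ has order $p$. This contradicts $|Z_2(G)/Z(G)|=p^2$ and finishes the proof.

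The step I expect to be most delicate is the last one: pinning down $Z(G)\cap G'=\gamma_c$ and $|G/G'|=p^3$ precisely enough to force $G/Z(G)$ into maximal class, since this is exactly where the hypotheses $Z(G)\not\le G'$ and coclass $2$ interact. The two bounding steps are comparatively routine once the excess computations for the upper and lower central series are in hand.
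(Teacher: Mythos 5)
Your proof is correct, but it takes a noticeably different and more roundabout route than the paper's. The paper uses the fact that for a $p$-group of class $c\ge 2$ the top factor $G/Z_{c-1}(G)$ of the upper central series is non-cyclic, hence of order at least $p^2$; combined with the standing assumption $|Z(G)|\ge p^2$ and coclass $2$, this immediately forces $|Z(G)|=p^2$, all intermediate upper-central factors to have order $p$, and $G/Z(G)$ to be of maximal class. From that the paper reads off $|Z(\inn(G))|=p$ and $|G/G'Z(G)|=p^2$, giving $|\centz(G)|\ge p^2>p=|Z(\inn(G))|$ directly, with no contradiction argument needed. You avoid the top-factor fact, and consequently must argue by contradiction: you squeeze $|\centz(G)|$ between the upper bound $|Z_2(G)/Z(G)|\le p^2$ and the lower bound $p^{2d(Z(G))}$, isolate the residual configuration $Z(G)\simeq C_{p^2}$, $G/G'Z(G)\simeq C_p\times C_p$, and then kill it with a lower central series count showing $G/Z(G)$ is of maximal class after all — in effect rediscovering the paper's key structural fact by a longer path. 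Your route does buy something: the observation that $G/G'Z(G)$ is never cyclic for a non-abelian $p$-group (because it is the abelianization of the nilpotent group $G/Z(G)$, and $H'\le\Phi(H)$ for nilpotent $H$) genuinely strengthens Lemma 2.1 by dropping the hypothesis that $G'$ be abelian. One small caution there: you phrase this as holding ``for any non-abelian $G$,'' but the inference from cyclic abelianization to cyclic group requires nilpotence (e.g.\ it fails for $S_3$); since everything in this paper is a finite $p$-group the claim is fine, but the justification should say so explicitly.
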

\begin{proof}
Let $|G|=p^n$. Then $|Z(G)|=p^2$ and $G/Z(G)$ is a maximal class group of order $p^{n-2}$. It follows that $|Z(\inn(G))|=p$, $|(G/Z(G))'|=p^{n-4}$ and hence $|\centz(G)|=|\Hom(G/G'Z(G),Z(G))|\ge p^2>|Z(\inn(G))|.$   
\end{proof}

\begin{thm}
There does not exist a group of order upto $p^6$ satisfying $(1)$.
\end{thm}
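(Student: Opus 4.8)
The plan is to reduce the statement to the two orders $p^5$ and $p^6$ (recall that under our standing assumptions $|G|\ge p^5$) and, within each, to pin down the nilpotence class. Since $G$ is not of maximal class and has class at least $3$, order $p^5$ forces class exactly $3$, whence $G$ has coclass $5-3=2$ and Lemma 2.2 disposes of it. For $|G|=p^6$ the class is $3$ or $4$; the class-$4$ case again has coclass $6-4=2$ and is killed by Lemma 2.2. Thus the whole theorem rests on the single remaining case $|G|=p^6$ of nilpotence class $3$.

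In that case I would write $|Z(G)|=p^z$ with $z\ge 2$. The crucial structural input is that class $3$ forces $G'$ to be abelian (as $[G',G']\le\gamma_4(G)=1$), so Lemma 2.1 applies and $G/G'Z(G)$ is non-cyclic, i.e.\ of rank at least $2$. Combining this with $\centz(G)\simeq\Hom(G/G'Z(G),Z(G))$ and the elementary bound $|\Hom(A,B)|\ge p^{r(A)r(B)}$ for nontrivial finite abelian $p$-groups gives $|\centz(G)|\ge p^{2}$. On the other side, $|Z(\inn(G))|=|Z_2(G)/Z(G)|$ is read off from $p^6=|Z(G)|\,|Z_2(G)/Z(G)|\,|G/Z_2(G)|$ together with the fact that $G/Z(G)$ is a class-$2$ group of order $p^{6-z}$ whose centre $Z_2(G)/Z(G)$ has forced order. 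A short look at $G/Z(G)$ then rules out $z=4$ (order $p^2$ would make $G$ of class $\le 2$) and, for $z=3$, shows $G/Z(G)$ is of maximal class, so $|Z(\inn(G))|=p<p^2\le|\centz(G)|$ and $(1)$ fails.

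This leaves $z=2$, which I expect to carry the real work. Here the numerics become rigid: $G/Z(G)$ has order $p^4$ and class $2$, so $|Z_2(G)/Z(G)|=p^2$ and $|Z(\inn(G))|=p^2$; matching this against $|\centz(G)|\ge p^2$ forces equality, which in turn forces $Z(G)\simeq C_{p^2}$ and $G/G'Z(G)\simeq C_p\times C_p$. Counting orders gives $G'Z(G)=Z_2(G)$, $|G'|=p^3$, and (using $Z(G)\not\le G'$) $G'\cap Z(G)=[G',G]$ of order $p$, so $G'/[G',G]$ has order $p^2$. The finishing blow is to pass to $L=G/Z(G)$: it is a group of order $p^4$ and class $2$ with $L'=G'Z(G)/Z(G)\simeq G'/[G',G]$ of order $p^2$, $L'=Z(L)$, and $L/L'\simeq C_p\times C_p$. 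Being $2$-generated, $L=\langle \bar a,\bar b\rangle$ has $L'=\langle[\bar a,\bar b]\rangle$ cyclic; but $L/L'$ has exponent $p$, so $\bar a^{\,p}\in L'=Z(L)$ and hence $[\bar a,\bar b]^p=[\bar a^{\,p},\bar b]=1$, giving $|L'|\le p$ and contradicting $|L'|=p^2$. This contradiction eliminates $z=2$ and completes the proof. The main obstacle is exactly assembling this last step, i.e.\ recognising that a $C_p\times C_p$-by-$C_{p^2}$ configuration cannot occur as $L/Z(L)$-by-$L'$ in a class-$2$ group.
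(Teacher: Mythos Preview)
Your argument is correct and tracks the paper's proof closely: the same coclass-$2$ reduction via Lemma~2.2 for $|G|=p^5$ and for class~$4$ at $|G|=p^6$, the same use of Lemma~2.1 to get non-cyclicity of $G/G'Z(G)$, and the same case split on $|Z(G)|$ in the remaining class-$3$ situation. The only divergence is in the subcase $|Z(G)|=p^2$: the paper invokes directly that a class-$2$ group of order $p^4$ has derived subgroup of order $p$, hence $|G'Z(G)|=p^3$, and then Lemma~2.1 yields $|\centz(G)|>p^2=|Z(\inn(G))|$; you instead assume (1), force $Z(G)\simeq C_{p^2}$ and $G/G'Z(G)\simeq C_p\times C_p$, deduce $|L'|=p^2$ for $L=G/Z(G)$, and then supply the commutator identity $[\bar a,\bar b]^p=[\bar a^{\,p},\bar b]=1$ to reach a contradiction. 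Your route is a touch more self-contained (you prove the needed piece of the $p^4$ structure rather than citing it), while the paper's is shorter; both rest on the same underlying fact about $2$-generated class-$2$ groups.
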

\begin{proof} The result follows from Lemma 2.2 if $|G|=p^5$. Therefore suppose that $|G|=p^6$. Observe that either (i) nilpotence class of $G$ is 4 and $|Z(G)|=p^2$ or (ii) nilpotence class of $G$ is 3 and  $|Z(G)|=p^2$ or  $p^3$. The result follows again from Lemma 2.2 in case (i). Assume that nilpotence class of $G$ is 3 and $|Z(G)|=p^3$. Then $G/Z(G)$ is an extra-special $p$-group and hence $|Z(\inn(G))|=|(G/Z(G))'|=p$. It follows that $|G'Z(G)|=p^3$ and hence
$$|\centz(G)|=|\Hom(G/G'Z(G),Z(G))|\ge p^2>|Z(\inn(G))|.$$
Finally assume that nilpotence class of $G$ is 3 and $|Z(G)|=p^2$. Then $G/Z(G)$ is a class 2 group of order $p^4$; consequently $|Z(\inn(G))|=p^2$, $|(G/Z(G))'|=p$ and hence $G'Z(G)$ is an abelian normal subgroup of $G$ of order $p^3$. It follows from Lemma 2.1 that $G/G'Z(G)$ is not cyclic and hence 
$|\centz(G)|>p^2>|Z(\inn(G))|.$
\end{proof}

\begin{thm}
A group $G$ of order $p^7$ satisfies $(1)$ if and only if $Z(G)\simeq C_{p^2}$, $|G'|=p^4$ and nilpotence class of $G$ is $4$.
\end{thm}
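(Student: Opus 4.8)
The plan is to translate condition (1) into a statement about the quotient $H=G/Z(G)$. Conjugation by any element of $Z_2(G)$ is a central automorphism fixing $Z(G)$ element-wise, so $Z(\inn(G))\le\centz(G)$, and $|Z(\inn(G))|=|Z_2(G)/Z(G)|=|Z(H)|$. Since $G'Z(G)/Z(G)=H'$, the third isomorphism theorem gives $G/G'Z(G)\cong H/H'$, whence $\centz(G)\cong\Hom(G/G'Z(G),Z(G))\cong\Hom(H/H',Z(G))$. The strict inequality $\centz(G)<\cent(G)$ holds automatically once $Z(G)\not\le G'$, so (1) is equivalent to the single equality $|\Hom(H/H',Z(G))|=|Z(H)|$. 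Thus everything is governed by $H$, whose nilpotence class is one less than that of $G$, together with the isomorphism type of $Z(G)$.

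By the reductions already in force, $G$ has class $3$ or $4$ (maximal class being excluded and coclass $2$, i.e. class $5$, being excluded by Lemma 2.2), and $|Z(G)|\ge p^2$. Writing $|Z(G)|=p^z$, the bound $|H|=p^{7-z}$ together with $H$ having class $c-1$ restricts $z$ to $\{2,3\}$ in class $4$ and to $\{2,3,4\}$ in class $3$. I would first clear away the cases in which a crude estimate suffices: since $H$ is generated by few elements and $|Z(G)|\ge p^2$, the quantity $|\Hom(H/H',Z(G))|$ is bounded below by a power of $p$ already exceeding $|Z(H)|$. This disposes of class $4$ with $z=3$ (where $H$ has maximal class of order $p^4$, so $|Z(H)|=p$), and of class $3$ with $z\in\{3,4\}$, as well as the subcase of class $3$ with $z=2$ and $|H'|=p^2$.

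The heart of the ``if'' direction is class $4$ with $z=2$, where $H$ has order $p^5$ and class $3$. Here I would show that $|G'|=p^4$ is equivalent to $|H'|=p^3$, which forces $H$ to be $2$-generated, $G/G'Z(G)$ to have order $p^2$, and $|Z(H)|=p^2$; the identity $|Z(\inn(G))|=|Z(H)|=p^2$ then holds automatically. It remains to evaluate $|\Hom(G/G'Z(G),Z(G))|$, a Hom-group between two abelian groups of order $p^2$: this equals $p^2$ exactly when at least one of $G/G'Z(G)$, $Z(G)$ is cyclic, and equals $p^4$ otherwise. Since $G/G'Z(G)$ cannot be cyclic (by Lemma 2.1 when $G'$ is abelian, and by a direct power-commutator argument when $|G''|=p$, these being the only possibilities as $G''\le\gamma_4(G)$ has order at most $p$), the equality $|\centz(G)|=p^2$ forces $Z(G)\cong C_{p^2}$; conversely these three conditions return (1).

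For the ``only if'' direction the remaining and most delicate case is class $3$ with $z=2$, where $H$ has order $p^5$ and class $2$. The difficulty is that a naive count on $H$ alone is inconclusive: the largest-center configuration, $|Z(H)|=p^3$ with $|H'|=p$, admits an abstract class-$2$ group with $H/H'\cong C_{p^3}\times C_p$ for which $|\Hom(H/H',Z(G))|$ would numerically equal $|Z(H)|$. The resolution, and the step I expect to be the main obstacle, is that such a quotient is not realizable as $G/Z(G)$ with the required center: working inside $G$ itself, the power-commutator relations of a class-$3$ group with $|G'|=p^2$ and $|\gamma_3(G)|=p$ force suitable $p$-th powers of the generators to become central, so that $G/G'Z(G)$ has exponent at most $p^2$. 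Being $2$-generated of order $p^4$, it is therefore $C_{p^2}\times C_{p^2}$, which yields $|\centz(G)|=p^4>|Z(H)|$ and rules the case out. Carrying out this exponent-and-rank bookkeeping carefully, distinguishing genuinely realizable quotients from merely numerically possible ones, is the crux; it may be organized through the classification of groups of order $p^5$, and the same internal analysis confirms the exact equality asserted in the class-$4$ case, completing the characterization.
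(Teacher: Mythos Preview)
Your reformulation through $H=G/Z(G)$ is tidy, and the case division largely parallels the paper's. The genuine gap is in the ``if'' direction: you list $|Z(H)|=p^2$ among the things that $|H'|=p^3$ ``forces'' and then say the equality $|Z(\inn(G))|=p^2$ ``holds automatically,'' but a class-$3$ group $H$ of order $p^5$ with $|H'|=p^3$ can perfectly well have maximal class, i.e.\ $|Z(H)|=p$, and nothing in your hypotheses on $G$ (class $4$, $Z(G)\cong C_{p^2}$, $Z(G)\not\le G'$, $|G'|=p^4$) rules this out without further work. In the paper this is the substantive part of the ``if'' proof: assuming $|Z(\inn(G))|=p$ one gets $|\gamma_3(G)|=p^2$, hence $|G'/\gamma_3(G)|=p^2$, and the two possibilities $G/G'\cong C_{p^2}\times C_p$ and $G/G'\cong C_p^3$ are then eliminated by citing two separate results of Blackburn. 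An elementary alternative exists---the bilinear commutator map yields a surjection $G^{\mathrm{ab}}\wedge G^{\mathrm{ab}}\twoheadrightarrow\gamma_2(G)/\gamma_3(G)$, and for $|G^{\mathrm{ab}}|=p^3$ the exterior square has exponent $p$ in either case, contradicting $|\gamma_2/\gamma_3|=p^2$---but some such argument must be supplied; it is not automatic.

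Two smaller points. The blanket claim that $\centz(G)<\cent(G)$ follows ``automatically once $Z(G)\not\le G'$'' is not justified in general (the restriction map $\Hom(G/G',Z(G))\to\Hom(G'Z(G)/G',Z(G))$ can vanish even when $G'Z(G)>G'$); the paper instead computes $|\cent(G)|=p^3>p^2$ directly under the ``if'' hypotheses, and you should do likewise. And in the ``only if'' analysis for class $4$, $z=2$, you treat only $|H'|=p^3$; the subcase $|H'|=p^2$ (equivalently $|G'|=p^3$) and the impossibility of $|G'|=p^5$ each need a line, as in the paper.
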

\begin{proof}
First suppose that $Z(G)\simeq C_{p^2}$, $|G'|=p^4$ and nilpotence class of $G$ is $4$. Then $G$ is purely non-abelian and hence
$$|\centz(G)|=|\Hom(G/G'Z(G),Z(G))|=p^2<p^3=|\Hom(G/G',Z(G))|=|\cent(G)|.$$
Now $G/Z(G)$ is a group of order $p^5$ and of nilpotence class 3. Therefore $|Z(\inn(G))|=p$ or $p^2$. Assume that $|Z(\inn(G))|=p$. Then $|Z_2(G)|=p^3$. Since $G$ is of class 4 and $Z(G)$ is not contained in $G'$, $\gamma_3(G)<Z_2(G)$ and hence $|\gamma_3(G)|=p^2$. Observe that $G/G'$ is isomorphic to $C_{p^2}\times C_p$ or $C_p\times C_p\times C_p$. The first case is not possible by \cite[Theorem 1.5(iii)]{bla}. In the second case, $G'=\Phi(G)$. We can choose a minimal generating set $\{x,y,z\}$ of $G$ in which one of the generators, say $z$, is in $Z(G)$. Then $G'=\langle [x,y],\gamma_3(G)\rangle$, and hence $G'/\gamma_3(G)$ is a cyclic group of order $p^2$. But this is a contradiction to \cite[Theorem 1.5(i)]{bla}. It follows that $|Z(\inn(G))|=p^2$ and hence $G$ satisfies (1).

Conversely suppose that $G$ satisfies (1). In view of Lemma 2.2, nilpotence class of $G$ is either 3 or 4. First assume that $G$ is of nilpotence class 3. Then $G'$ is abelian and therefore
$$d(\centz(G))\ge d(G/G'Z(G))\ge 2$$ 
by Lemma 2.1. It follows that $p^2\le |Z(\inn(G))|\le p^3$ and hence $p^2\le |Z(G)|\le p^3$. If $|Z(G)|=p^3$, then $|Z(\inn(G))|=p^2$, $|(G/Z(G))'|=p$ by \cite[Theorem 2]{kim} and thus $|G'Z(G)|=p^4$. Since $G/G'Z(G)$ is not cyclic, 
$$|\centz(G)|=|\Hom(G/G'Z(G),Z(G))|\ge p^3,$$ 
which is a contradiction to (3.1). We therefore suppose that $|Z(G)|=p^2$.
First consider the case when $|Z(\inn(G))|=p^{2}$. 
Observe that $|G'|< p^4$, because if $|G'|\ge p^4$, then $|Z_{2}(G)|=|Z(G)G'|\ge p^5$ and hence $|Z(\inn(G))|\ge p^3$. If $|G'|=p^{2}$ or $p^3$, then 
$G/G'Z(G)$ is not cyclic by Lemma 2.1.  Thus 
$$|\centz(G)|=|\Hom(G/G'Z(G),Z(G))|\ge p^3,$$ 
a contradiction to (3.1).  Next consider the case when $|Z(\inn(G))|=p^{3}$. Let $H=G/Z(G)$. Then $|H|=p^5$, $|H/Z(H)|=p^2$ and nilpotence class of $H$ is 2. Thus $|H'|=p$ by \cite[Theorem 2.1(i)]{weigold}. Therefore $|G'Z(G)|=p^3$ and hence $|G'|=p^2$. 
Now
$$|\centz(G)|=|\Hom(G/G'Z(G),Z(G))| = |Z(\inn(G))|=p^3,$$ which is possible only when $Z(G)\simeq C_{p^2}$ and $G/G'Z(G)\simeq C_{p^3}\times C_{p} $. Let 
$$G/G'Z(G)\simeq \langle xG'Z(G) \rangle \times \langle yG'Z(G) \rangle ,$$ 
where $x^{p^3},y^{p} \in G'Z(G)$ but $x^{p},x^{p^2} \notin G'Z(G) $.
  If $G'$ is elementary abelian, then for any $g\in G,$
$$[x^p,g]=[x,g]^{x^{p-1}}[x,g]^{x^{p-2}}\ldots [x,g]^x[x,g]=[x,g]^p[x,g,x]^{p(p-1)/2}=1$$ 
implies that $x^p \in Z(G)$; and if $G'$ is cyclic, then $G$ is regular and thus $1=[x,g]^{p^2}=[x^{p^2},g]$ implies that $x^{p^2} \in Z(G)$. In both the cases, we get a  contradiction. 
 
 We next assume that nilpotence class of $G$ is 4. Observe that $|G'|<p^5$, because if $|G'|=p^5$, then $d(G)=2$ and one of the generators $x,y$ of $G$ will lie in $Z(G)$ and thus $G'=\langle [x,y],\gamma_3(G)\rangle=\gamma_3(G)$, which is not so. We now divide the proof in three cases depending on the order  $|Z(\inn(G))|$ of $Z(\inn(G))$. If $|Z(\inn(G))|=p^3$, then $|Z_2(G)|\ge p^5$, which is not possible in a nilpotent group of class 4. Next suppose that $|Z(\inn(G))|=p$. Then $G/Z(G)G' \simeq C_{p}$ and thus $|Z(G)G'|=p^{6}$. Now if $|Z(G)|=p^{2}$, then $|G'|=p^{5}$, not possible. If $|Z(G)|=p^{3}$, then $G/Z(G)$ has nilpotence class 3 and order $p^{4}$ and thus $|(G/Z(G))'|=p^{2}$ by \cite[Theorem 2]{kim}, which gives  
$|Z(G)G'|=p^{5}$, a contradiction. If $|Z(G)|=p^{4}$, then $G/Z(G)$ has nilpotence class 3 and order $p^{3}$ and thus$|(G/Z(G))'|=p$ implies that  
$|Z(G)G'|=p^{5}$, a contradiction. Finally suppose that $|Z(\inn(G))|=p^{2}$. In this case $G/Z(G)G' \simeq C_{p} \times C_{p}$ or $C_{p}$ or $C_{p^{2}}$ or $C_{p^3}$ or $C_{p^4}$. Observe that $|Z(G)|=p^2$, because if $|Z(G)|=p^{3}$ or $p^{4}$, then $|Z_{2}(G)|=p^{5}$ or $p^{6}$, which is not possible in a $p$-group of nilpotence class 4.   We claim that $Z(G)\simeq C_{p^2}$ and $|G'|=p^4$. If $Z(G) \simeq C_{p} \times C_{p}$, then  $G/Z(G)G' $ is cyclic and thus $G'$ is not abelian by Lemma 2.1. Since $|G'|<p^5$, $|G'|$ =  $p^{4}$ by \cite[Satz. 7.8, p. 306]{hup}. Therefore $G/Z(G)G' \simeq
C_{p^{2}}$. If $d(G)=3$, then $\exp(G/G')=p$ and if $d(G)=2$, then $G'Z(G)=\Phi(G)$, which is  a contradiction to $\exp(G/G'Z(G))=p^2$. It follows that $Z(G) \simeq C_{p^2}$. If $|G'|=p^2$ or $p^3$, then since $|\cent(G)|=p^2$, $G/G'Z(G)\simeq C_{p^3}$ or $C_{p^4}$, which is not possible by Lemma 2.1 and \cite[Satz. 7.8, p. 306]{hup}. This proves our claim.
\end{proof}

We next  give an example of a group $G$ of order $3^7$ which satisfies equation $(3.1)$. This  group $G$ has ID 131 among the groups of order 2187 in  SmallGroup library of GAP \cite {gap}. Consider
$$
\begin{array}{lcl}
G&=&\langle \alpha_1,\alpha_2,\alpha_3,
\alpha_4,\alpha_5,\alpha_6,\alpha_7
 \;|\;[\alpha_1,\alpha_2]=\alpha_3^{-1},[\alpha_1,\alpha_3]=\alpha_5^{-1},[\alpha_1,\alpha_4]=1,\\
&&\\ 
&& [\alpha_1,\alpha_5]=1,
[\alpha_1,\alpha_6]=1,[\alpha_1,\alpha_7]=1,[\alpha_2,\alpha_3]=\alpha_6^{-1},[\alpha_2,\alpha_4]=1,
[\alpha_2,\alpha_5]=1,\\
&&\\
&&
[\alpha_2,\alpha_6]=1,[\alpha_2,\alpha_7]=1,[\alpha_3,\alpha_4]=1,
[\alpha_3,\alpha_5]=1,[\alpha_3,\alpha_6]=1,[\alpha_3,\alpha_7]=1,\\
&&\\
&&[\alpha_4,\alpha_5]=1,[\alpha_4,\alpha_6]=1,[\alpha_4,\alpha_7]=1,
[\alpha_5,\alpha_6]=1,[\alpha_5,\alpha_7]=1,[\alpha_6,\alpha_7]=1,\\
&&\\
&&
 \alpha_1^{3}=\alpha_4, \alpha_3^{3}=\alpha_7^{-1}, \alpha_4^{3}=\alpha_7,\alpha_1^{27}=\alpha_2^{3}=\alpha_3^{9}=\alpha_4^{9}=
 \alpha_5^{3}=\alpha_6^{3}=\alpha_7^{3}=1\rangle.
 \end{array}
 $$
It can be checked that $Z(G)\simeq \langle\alpha_4\rangle \simeq C_9$, $G'\simeq \langle\alpha_3,\alpha_5,\alpha_6,\alpha_7\rangle$,  $\alpha_4 \in \Phi(G) - G'$, $d(G)=2$, nilpotence class of $G$ is $4$,  $Z(G) \cap G' \simeq \langle \alpha_7 
\rangle  \simeq C_3$ ,   $|G'|=81$,  $Z(G)\notin G'$,  $|\centz(G)|= | \Hom(G/G'Z(G),Z(G))| = | \Hom(C_{p} \times C_{p},C_{p^2})|=p^{2}$  and  $|Z_2(G)/Z(G)|=p^{2}.$

\end{document}